\documentclass[lettersize,journal]{IEEEtran}
\usepackage{amsmath,amsfonts}
\usepackage{algorithmic}
\usepackage{amsthm}
\usepackage{array}
\usepackage[caption=false,font=normalsize,labelfont=sf,textfont=sf]{subfig}
\usepackage{textcomp}
\usepackage{stfloats}
\usepackage{url}
\usepackage{verbatim}
\usepackage{graphicx}
\newtheorem{theorem}{Theorem}

\newtheorem{corollary}{Corollary}

\newtheorem{definition}{Definition}
\newtheorem{example}{Example}

\newtheorem{lemma}[theorem]{Lemma}

\newtheorem{remark}{Remark}

\setlength{\abovedisplayskip}{3pt}
\setlength{\belowdisplayskip}{3pt}

\hyphenation{op-tical net-works semi-conduc-tor IEEE-Xplore}
\def\BibTeX{{\rm B\kern-.05em{\sc i\kern-.025em b}\kern-.08em
    T\kern-.1667em\lower.7ex\hbox{E}\kern-.125emX}}
\usepackage{balance}
\begin{document}
\title{Reliability of coherent systems whose operating life is defined by
the lifetime and power of the components}

\author{Ismihan Bayramoglu
\thanks{I. Bayramoglu is with the Department of Mathematics, Izmir University of Economics, Izmir, T\"{u}rkiye (e-mail: ismihan.bayramoglu@ieu.edu.tr)}}

\markboth{paper identification number}%
{How to Use the IEEEtran \LaTeX \ Templates}

\maketitle

\begin{abstract}
We consider systems whose lifetime is measured by the time of physical
degradation of components, as well as the degree of power each component
contributes to the system. The lifetimes of the components of the system are
random variables. The power that each component contributes to the system is
the product of a random variable and a time-decreasing stable function. The
operational reliability of these systems is investigated and shown that it
is determined by the joint lifetime functions of the order statistics and
their concomitants. In addition to general formulas, examples are given
using some known life distributions, and graphs of the operation life
functions are shown.
\end{abstract}

\begin{IEEEkeywords}
Reliability function, survival function, mean residual life function, order statistics, concomitants
\end{IEEEkeywords}

\section{Introduction}
\IEEEPARstart{S}{ystem} reliability is the most important issue for engineers when constructing technical systems with multiple components. The system's
reliability is a function of time, more precisely, it is a probability that
the system will be up and running without degradation in a given time. Most
technical systems are coherent systems with all components being relevant.
According to the way, the components are connected, systems can be described
by the structure function. In this context, for a nice description of the
statistical theory of reliability, one can consider the pioneering book of
Barlow and Proschan (1981). Usually, the lifetimes of the component are
assumed to be random variables and the system lifetime is described by the
order statistics of these random variables. In addition to the reliability
function, many other important functions are considered in reliability
analysis. The mean residual life (MRL) function, for example, is one of the
important functions, that describes the average remaining life of the system
which is intact at the time of inspection. The reliability of systems is a
well-developed subject and there are a number of  research papers and books
in the literature devoted to reliability theory and system analysis. In
addition to the classical book Barlow and Prochan (1981) we refer to also
Gnedenko et. al. (1969). Many papers dealing with reliability are devoted to
the investigation of mean residual life functions at the component and
system level. Some of first papers dealing with the MRL functions at the
system level are Bairamov et al. (2002), Asadi and Bairamov (2005), Asadi
and Bairamov (2006), Bairamov and Arnold (2008), Bayramoglu (2013), G\"{u}%
rler and Bairamov (2009), Eryilmaz (2013), Li and Li ((2023), Iscioglu and
Erem (2020) among many others.

In this paper we consider a coherent system of $n$ components with
corresponding lifetimes $X_{1},X_{2},...,X_{n},$ $\ $\ which are nonnegative
independent and identically distributed (iid) random variables (rv's) having
common distribution function (cdf) $F(x)$ and probability density function
(pdf) $f(x).$ Assume that at any time $t$ the contribution of the $i$th
component to the system consists of the power $W_{i}(t),$ respectively, $%
i=1,2,...,n.$ The probabilistic model for the power is%
\begin{equation*}
W_{i}(t)=W\varphi (t),
\end{equation*}%
where $W$ is the random variable having cdf $F_{W}(x),$ and $\varphi (t)$ is
a decreasing continuous function defined on $[0,\infty )$ with the
properties
\begin{eqnarray}
1.\text{ }\varphi (0) &=&1  \label{1} \\
2.\text{ }\lim_{t\rightarrow \infty }\varphi (t) &=&0.  \notag
\end{eqnarray}%
Sometimes we prefer to model $W$ with uniform distribution in the interval $%
[w_{1},w_{2}],$ where the interval $[w_{1},w_{2}]$ is the interval of the
possible accepted minimal power $w_{1}$ and maximal power $w_{2}$. \ In this
case, if we consider the work of the system starting from time $t=0$ then $%
W_{i}(0)=W,$ where $W$ is a uniform on the interval $[w_{1},w_{2}]$ random
variable. Assuming that the power decreases by the time, the use the model $%
W(t)=W\varphi (t)$ is reasonable, since $\varphi (t)$ is a decreasing
function.

Let $X_{1:n}\leq X_{2:n}\leq \cdots \leq X_{n:n}$ be the order statistics of
$X_{1},X_{2},...,X_{n}$ and $W_{1:n}\leq W_{2:n}\leq \cdots \leq W_{n:n}$ be
the order statistics of $W_{1},W_{2},...,W_{n}.$ Assume that the system is
functioning at time $t$ if \ at least $n-r+1$ of the components are alive
and the power of functioning components at this time is greater than $s.$ \
If the components are connected \ parallel then the power of the system at
time $t$ is
\begin{equation*}
\max (W_{1}(t),W_{2}(t),....,W_{n}(t))=W_{n:n}\varphi (t).
\end{equation*}%
If the components are connected series, then the power of the system at time
$t$ is
\begin{equation*}
W_{1}(t)+W_{2}(t)+\cdots +W_{n}(t)=(W_{1}+W_{2}+\cdots +W_{n})\varphi (t).
\end{equation*}%
The capacity of any system at time $t$ is measured \ by $W_{1}(t)+W_{2}(t)+%
\cdots +W_{n}(t)$ and the power of the system is measured by $W_{n:n}\varphi
(t).$

The motivation for considering such a system and also as an example one can
consider the battery system of electric cars. \ In the context of electric
car batteries, voltage and capacity are two crucial specifications that
determine the performance, range, and overall capabilities of the vehicle.
Here's what each of them represents and their roles: Voltage, measured in
volts (V), represents the electric potential difference between two points
in an electric circuit. In the case of electric car batteries, it indicates
the force or pressure at which electric energy is delivered to the vehicle's
motor. Capacity, often measured in ampere-hours (Ah) or milliampere-hours
(mAh), represents the total amount of electric charge a battery can store
and deliver over time. It indicates how long the battery can sustain a
specific current flow before it is fully discharged. Capacity is directly
related to the vehicle's driving range. A higher capacity means the battery
can store more energy, allowing the car to travel a greater distance on a
single charge. Capacity determines how long the car can operate before
needing to be recharged. Higher-capacity batteries provide longer driving
durations, making them suitable for long journeys. \ While voltage
determines the power output, capacity influences how long the car can
sustain high power levels. A battery with higher capacity can deliver
sustained power for a more extended period, resulting in consistent
acceleration and performance. Capacity affects the maximum current a battery
can deliver. Electric vehicles require high discharge rates for rapid
acceleration, and a battery with sufficient capacity can meet these demands
effectively.

As it is known in electric vehicles, multiple individual battery cells are
connected in a specific arrangement to form a battery pack. These packs can
vary in size and configuration based on the vehicle's design and desired
performance. One of the most common methods of connecting batteries in
electric cars is the series connection in which the positive terminal of one
battery is connected to the negative terminal of the next battery, and so
on. This arrangement increases the overall voltage of the battery pack. By
connecting batteries in series, the total voltage of the battery pack is the
sum of the voltages of the individual batteries, while the overall capacity
(in ampere-hours) remains the same. The second common method of connecting
the batteries is a parallel connection, where the positive terminals of all
batteries are connected together, and the negative terminals are connected
together. This arrangement increases the overall capacity of the battery
pack while the voltage remains the same as that of individual batteries. For
example, if you have four batteries each with a capacity of 2000mAh and you
connect them in parallel, the total pack capacity would be 8000mAh (2000mAh
+ 2000mAh + 2000mAh + 2000mAh), while the voltage remains the same as a
single battery. \ The series and parallel connection can be combined as a
series-parallel connection. In fact, in many electric cars, a combination of
series and parallel connections is used to achieve the desired balance
between voltage and capacity. By arranging batteries in both series and
parallel configurations, manufacturers can create battery packs that meet
the specific voltage and capacity requirements of the vehicle. These
arrangements allow electric vehicles to achieve the necessary balance
between voltage and capacity, ensuring optimal performance, range, and
efficiency. Additionally, sophisticated battery management systems (BMS) are
employed to monitor and manage the charging and discharging of individual
cells within the pack, ensuring the safety and longevity of the battery
system. (see e.g Chen et al. 2019, Corrigan 2022).

The paper is organized as follows. In Section 2 we consider a new model of a
coherent system whose reliability is characterized by the physical lifetimes
of the components and certain powers the components need to have. We define
the operational lifetime of the system depending on two parameters, the
physical lifetimes of the components and the components' powers, which are
modeled by a product of a certain random variable and a decreasing function,
indicating that the powers of the components are decreasing upon
use---examples, demonstrating the correctness of the formulas for well-known
distributions provided within the text. In Section 3 we provide a formula
for calculating the MRL function of the system whose components have a power
impact on the system's lifetime.

\section{The reliability of the system with components that have a power
impact on the system's lifetime}
\noindent Let a coherent system consists of $n$ components and $X_{1},X_{2},...,X_{n}$
be the lifetimes of the components, respectively. We assume that $%
X_{1},X_{2},...,X_{n}$ are iid continuous random variables with \ cdf $F$
and pdf $f.$ Furthermore, suppose that the contribution of each component
that is important for functioning of the system consist of a power given by
a random variable $W.$ More precisely, $W_{i}(t)=W_{i}\varphi
(t),i=1,2,...,n,$ is the power that $i$th component contributes to the
system guaranteeing its functioning. The random variables $X_{i}$ and $W_{i}$
are assumed to be dependent with joint cdf $F_{X,W}(x,y)$ and joint pdf $%
f(x,y),$ the random vectors $(X_{1},W_{1}),(X_{2},W_{2}),...,(X_{n},W_{n})$
are independent copies of random vector $(X,W)$ , whose joint distribution
function is%
\begin{equation*}
F_{X,W}(x,w)=C(F_{X}(x),F_{W}(w)),
\end{equation*}%
where $F_{X}(x)$ is a marginal distribution function of $X$ and $F_{W}(w)$
is a marginal distribution function of $W.$ We may assume that $%
F_{W}(w)=1-\exp (-\lambda w),w>0,\lambda >0$ or $%
F_{W}(w)=Uniform(w_{1},w_{2}).$ We assume $\varphi (t)$, $t\geq 0$ is
nonegative and decreasing function possesing properties (\ref{1}). \ $X_{i},$
$i=1,2,...,n$ is measured in time units, $W$ is measured in power units that
are used in physics.

\subsection{The n-r+1-out-of-n system with components that have a power
impact on the system's operation lifetime}

Assume that the coherent system functions if at least $n-r+1$ of $n$
components function, and in addition to this the power of the functioning
components is at least $s.$ We call this system the $n-r+1$-out-of-$n-$power
system. Then the functioning time of the system depends on the lifetime $%
T(X_{1},X_{2},...,X_{n})$ as well as on $W_{1}(t),W_{2}(t),...,W_{n}(t).$

If the components have less power than $s$, then we can interfere with the
system by recharging these components, and we can make it work. \ Denote the
functioning time of the $n-r+1$-out-of-$n$-power system by $T_{r:n;s}\equiv
S(X_{1},X_{2},...,X_{n},W_{1},W_{2},...,W_{n}).$ The reliability of such a
system will be defined by combining the physical (or technical) reliability
involving the number of the working components, and the operating
reliability, i.e. the reliability of useful work of the system involving the
required power necessary for functioning of the system. \ We will call the
reliability defined in this way the operational reliability. The physical
(or technical) reliability function is
\begin{equation}
P(t)\equiv P\{X_{r:n}>t\}  \label{s1}
\end{equation}%
The system's physical (or technical reliability) function $P(t)$ is
determined by the distribution function of $r$th order statistic and
requires the functioning at least $n-r+1$ of the components at time $t.$

\begin{definition}
The system's operational-reliability function is%
\begin{equation*}
\mathbf{P}_{s}(t)\equiv P\{T_{r:n;s}>t\}
\end{equation*}
\begin{eqnarray}
\hspace{-5mm} &\equiv& \hspace{-3mm}P\{X_{r:n}>t\mid
W_{[r:n]}(t)>s,W_{[r+1:n]}(t)>s,...,W_{[n:n]}(t)>s\}  \notag \\
\hspace{-5mm} &=& \hspace{-3mm}P\{X_{r:n}>t\mid \min (W_{[r:n]}(t),W_{[r+1:n]}(t),...,W_{[n:n]}(t))>s\},
\label{s2}
\end{eqnarray}%
where $W_{[i:n]}(t)$ is the concomitant of order statistic $W_{i:n}(t),$ $%
i=1,2,...,n,$ for a fixed $t.$ The variable $s,$ plays a role of a
parameter, $t$ is a time.  From this definition;  the system's operational
reliability is a conditional probability of system's survival given that
all the working components have power greater than $s$. $\ T_{r:n;s}$
denotes the conditional random variable
\begin{equation*}
\{X_{r:n}\mid W_{[r:n]}(t)>s,W_{[r+1:n]}(t)>s,...,W_{[n:n]}(t)>s\}.
\end{equation*}
\end{definition}

Note that, the model of complex system with two dependent subcomponents per
component whose reliability involves the concomitants was considered in
Bayramoglu (2013).

Taking into account that $W_{i}(t)=W_{i}\varphi (t),$ $i=1,2,...,n,$ where
$\varphi (t)$ is a decreasing deterministic function satisfying the
properties (\ref{1}) the operational-reliability function of the system
will be calculated with the joint survival function 
\begin{eqnarray*}
\mathbf{Q}(t,s) &\equiv& P\{X_{r:n}>t,W_{[r:n]}(t)>s,W_{[r+1:n]}(t)>s,..., \\
    && W_{[n:n]}(t)>s\} \\
    &=&P\{X_{r:n}>t,W_{[r:n]}>\frac{s}{\varphi (t)},W_{[r+1:n]}>\frac{s}{\varphi (t)},..., \\
    && W_{[n:n]}(t)>\frac{s}{\varphi (t)}\}.
\end{eqnarray*}

To find the probability $\mathbf{Q}(t,s)$ we need to evaluate now the
following probability

\begin{eqnarray}
Q(t,s) &\equiv& P\{X_{r:n}>t,W_{[r:n]}>s,W_{[r+1:n]}>s..., \notag \\
       && W_{[n:n]}>s\} \notag \\
       &=& P\{X_{r:n}>t,\min (W_{[r:n]},W_{[r+1:n]},..., \notag \\
       && W_{[n:n]})>s\} \label{d21}.
\end{eqnarray}

The following lemma presents a formula for the probability (\ref{d21}) in
terms of the joint distribution of $X$ and $W.$

\begin{lemma}
It is true that
\begin{equation*}
Q(t,s)=P\{X_{r:n}>t,\min (W_{[r:n]},W_{[r+1:n]},...,W_{[n:n]})>s\}
\end{equation*}

\begin{equation*}
=P\{X_{r:n}>t,W_{[r:n]}>s,W_{[r+1:n]}>s,...,W_{[n:n]}>s\}
\end{equation*}
\begin{eqnarray}
&=&n\binom{n-1}{r-1}\int\limits_{t}^{\infty}[P\{X<x\}]^{r-1}[P\{X>x,W>s\}]^{n-r} \notag \\
&&\left( \int\limits_{s}^{\infty}f_{X,W}(x,y)dy\right) dx,  \notag \\
&=&n\binom{n-1}{r-1}\int\limits_{t}^{\infty }[P\{X<x\}]^{r-1}[\bar{F}(\text{
}x,s)]^{n-r} \notag \\
&&\left( \int\limits_{s}^{\infty }f_{X,W}(x,y)dy\right) dx, \label{d22}
\end{eqnarray}

where $\bar{F}_{X,W}(\text{ }x,s)=P\{X>x,W>s\}.$
\end{lemma}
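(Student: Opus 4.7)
The plan is to decompose the event of interest by conditioning on which of the $n$ pairs $(X_i,W_i)$ supplies the value $X_{r:n}$, and then exploit the iid structure of the vectors $(X_i,W_i)$ together with the definition of concomitants.

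First I would note that because $F$ is continuous, with probability one there is a unique index $i\in\{1,\ldots,n\}$ with $X_i=X_{r:n}$; for that index the accompanying $W_i$ is by definition the concomitant $W_{[r:n]}$. The other $n-r$ indices $j$ with $X_j>X_{r:n}$ contribute exactly the concomitants $W_{[r+1:n]},\ldots,W_{[n:n]}$, while the remaining $r-1$ indices with $X_j<X_{r:n}$ supply concomitants that do not enter the event. Consequently the event whose probability is $Q(t,s)$ is the disjoint union over $i=1,\ldots,n$ of
$$A_i=\{X_i>t,\ W_i>s,\ X_i=X_{r:n},\ W_j>s\text{ for every }j\neq i\text{ with }X_j>X_i\}.$$

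By the exchangeability of the iid random vectors $(X_i,W_i)$, all $A_i$ share the same probability, so $Q(t,s)=n\,P(A_n)$. Next I would evaluate $P(A_n)$ by conditioning on $(X_n,W_n)=(x,y)$ with $x>t$ and $y>s$. The event $\{X_n=X_{r:n}\}$ means exactly $r-1$ of the other $n-1$ variables fall below $x$; the additional constraint on the concomitants requires the remaining $n-r$ to satisfy both $X_j>x$ and $W_j>s$. Choosing which $r-1$ of the $n-1$ other indices play the "lower" role contributes a factor $\binom{n-1}{r-1}$. Since the other $n-1$ pairs are iid and independent of $(X_n,W_n)$, the lower indices each contribute $P\{X<x\}=F(x)$ (with $W$ unconstrained) and the upper indices each contribute $P\{X>x,\ W>s\}=\bar F_{X,W}(x,s)$.

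Multiplying these factors, using the joint density $f_{X,W}(x,y)$ for $(X_n,W_n)$, and integrating $y$ over $(s,\infty)$ and $x$ over $(t,\infty)$ then yields
$$n\binom{n-1}{r-1}\int_t^\infty F(x)^{r-1}\,\bar F_{X,W}(x,s)^{n-r}\Bigl(\int_s^\infty f_{X,W}(x,y)\,dy\Bigr)\,dx,$$
which is exactly the claimed expression. The second displayed form in the lemma is only a notational rewriting of $P\{X>x,W>s\}$ as $\bar F_{X,W}(x,s)$.

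The step I expect to demand the most care is the combinatorial bookkeeping: justifying that the decomposition into $A_i$ is genuinely disjoint (which uses continuity), that the concomitants $W_{[r:n]},\ldots,W_{[n:n]}$ correspond precisely to the $W$-coordinates of the index achieving $X_{r:n}$ together with the $n-r$ indices strictly above it, and that after conditioning on $(X_n,W_n)$ the remaining $n-1$ iid pairs split cleanly into the $(r-1)$-lower and $(n-r)$-upper groups yielding the stated product of the iid probabilities. Once that is in place, the rest is a direct integration.
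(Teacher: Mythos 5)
Your proposal is correct and follows essentially the same route as the paper's Appendix proof: decompose the event according to which pair $(X_k,W_k)$ realizes $X_{r:n}$, condition on its value, use independence of the iid vectors to factor the lower group as $[F_X(x)]^{r-1}$ and the upper group as $[\bar F_{X,W}(x,s)]^{n-r}$, and integrate over $x>t$, $y>s$. The only cosmetic difference is that you invoke exchangeability to write $Q(t,s)=nP(A_n)$ with a single binomial coefficient, whereas the paper carries out the same count via explicit sums over $k$ and the index tuples, yielding the identical factor $n\binom{n-1}{r-1}$.
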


\setlength{\belowdisplayskip}{3pt}
\begin{proof}
(See Appendix.)
\end{proof}
\unskip
\vspace{-2mm}
Formula (\ref{d22}) represents the joint survival function of the random
variables $X_{r:n}$ and $W_{[1,r:n]}\equiv \min
(W_{[r:n]},W_{[r+1:n]},...,W_{[n:n]}).$ The joint pdf of these random variables is given in the following Corollary:

\begin{corollary}
The joint pdf of $X_{r:n}$ and $W_{[1,r:n]}$ is
\begin{eqnarray*}
f_{X_{r:n},W_{[1,r:n]}}(t,s) &=&\frac{\partial ^{2}Q(t,s)}{\partial
t\partial s}=-n(n-r)\binom{n-1}{r-1} \\
&&[P\{X<t\}]^{r-1} \times \bar{F}_{X,W}(\text{ }t,s)]^{n-r-1} \\
&&\frac{\partial \bar{F}_{X,W}(\text{ }t,s)}{\partial s}\int\limits_{s}^{\infty }f_{X,W}(t,y)dy \\
+n\binom{n-1}{r-1}[P\{X &<&t\}]^{r-1}[\bar{F}_{X,W}(\text{ }t,s)]^{n-r}f_{X,W}(t,s) \\
&=&-n(n-r)\binom{n-1}{r-1}[P\{X<t\}]^{r-1} \\
&&[\bar{F}_{X,W}(\text{ }t,s)]^{n-r-1}\times \bar{F}(\text{ }t,\partial s)\\
&&\int\limits_{s}^{\infty}f_{X,W}(t,y)dy \\
+n\binom{n-1}{r-1}[P\{X &<&t\}]^{r-1}[\bar{F}_{X,W}(\text{ }t,s)]^{n-r}f_{X,W}(t,s)
\end{eqnarray*}
where,
\begin{eqnarray*}
\bar{F}_{X,W}(\text{ }\partial t,s) &=&\frac{\partial }{\partial t}\bar{F}%
_{X,W}(\text{ }t,s) \\
&=&\frac{\partial }{\partial t}\int\limits_{t}^{\infty
}\int\limits_{s}^{\infty }f_{X,W}(x,y)dydx\\
&=&-\int\limits_{s}^{\infty}f_{X,W}(t,y) \\
\bar{F}_{X,W}(\text- \partial t,s) &\equiv &-\bar{F}_{X,W}(\text{ }\partial t,s), \\
\bar{F}_{X,W}(\text{ }t,\partial s) &=&\frac{\partial }{\partial s}\bar{F}_{X,W}(\text{ }t,s) \\
&=&\frac{\partial }{\partial s}\int\limits_{t}^{\infty}\int\limits_{s}^{\infty }f_{X,W}(x,y)dydx \\
&=&-\int\limits_{t}^{\infty}f_{X,W}(x,s)dx \\
\bar{F}_{X,W}(\text{ }t,-\partial s) &\equiv &-\bar{F}_{X,W}(\text{ }%
t,\partial s) \\
&&\text{and } \\
&&\frac{\partial }{\partial s}\left[ \frac{\partial }{\partial t}%
\int\limits_{t}^{\infty }\int\limits_{s}^{\infty }f_{X,W}(x,y)dydx\right]
\\
&=&\frac{\partial }{\partial s}\left[ -\int\limits_{s}^{\infty }f_{X,W}(t,y)%
\right] \\
&=&\frac{\partial \bar{F}_{X,W}(\text{ }-\partial t,s)}{\partial s}%
=f_{X,W}(t,s)
\end{eqnarray*}
\end{corollary}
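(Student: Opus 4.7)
The plan is to recognize the statement as a routine computation of the mixed partial derivative $f_{X_{r:n},W_{[1,r:n]}}(t,s) = \partial^{2}Q(t,s)/\partial t\,\partial s$, where $Q(t,s)$ is the joint survival function supplied by Lemma~1 through formula~(\ref{d22}). Since $(X_{r:n},W_{[1,r:n]})$ is supported on $[0,\infty)^{2}$ and $Q$ is the tail probability $P\{X_{r:n}>t,W_{[1,r:n]}>s\}$, the joint density is obtained by differentiating twice, once in each argument.

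First, I would differentiate in $t$. Only the lower limit of the outer integral in (\ref{d22}) depends on $t$, so by the fundamental theorem of calculus,
\begin{equation*}
\frac{\partial Q(t,s)}{\partial t}=-n\binom{n-1}{r-1}[F(t)]^{r-1}[\bar{F}_{X,W}(t,s)]^{n-r}\!\!\int_{s}^{\infty}\!\!f_{X,W}(t,y)\,dy .
\end{equation*}
This produces the evaluation-at-$t$ factor and fixes the global sign; the constant $n\binom{n-1}{r-1}$ is unchanged and $[F(t)]^{r-1}$ remains as a pure $t$-factor untouched by the second differentiation.

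Second, I would differentiate in $s$, which is the only step requiring the product rule, applied to the two $s$-dependent factors $[\bar{F}_{X,W}(t,s)]^{n-r}$ and $\int_{s}^{\infty}f_{X,W}(t,y)\,dy$. The chain rule on the first factor contributes $(n-r)[\bar{F}_{X,W}(t,s)]^{n-r-1}\,\partial\bar{F}_{X,W}(t,s)/\partial s$, and the fundamental theorem of calculus on the second factor gives $-f_{X,W}(t,s)$. Combining these and carrying through the minus sign from the first step yields exactly the two-term expression in the Corollary, with the first term carrying the factor $-n(n-r)\binom{n-1}{r-1}$ and the second term $+n\binom{n-1}{r-1}$. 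The alternative form in the Corollary just rewrites $\partial\bar{F}_{X,W}(t,s)/\partial s=\bar{F}_{X,W}(t,\partial s)$ via the notation introduced at the end of the statement.

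The only mild obstacle is sign bookkeeping: $\bar{F}_{X,W}(t,\partial s)$ is itself negative (equal to $-\int_{t}^{\infty}f_{X,W}(x,s)\,dx$), so one must be careful not to cancel the explicit $-n(n-r)\binom{n-1}{r-1}$ prefactor with that implicit sign. The auxiliary identities listed after the main formula ($\bar{F}_{X,W}(\partial t,s)=-\int_{s}^{\infty}f_{X,W}(t,y)\,dy$, $\bar{F}_{X,W}(t,\partial s)=-\int_{t}^{\infty}f_{X,W}(x,s)\,dx$, and the mixed-partial identity $\partial^{2}\bar{F}_{X,W}/\partial t\,\partial s=f_{X,W}(t,s)$) are each one-line consequences of the fundamental theorem and need no separate argument; they simply serve to justify the notation. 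Joint differentiability of $Q$ is guaranteed by the continuity of $f_{X,W}$ assumed throughout the section.
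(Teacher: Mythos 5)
Your proposal matches the paper's own proof essentially step for step: differentiate the expression for $Q(t,s)$ from Lemma~1 in $t$ via the fundamental theorem of calculus, then in $s$ via the product rule applied to $[\bar{F}_{X,W}(t,s)]^{n-r}$ and $\int_{s}^{\infty}f_{X,W}(t,y)\,dy$, with the same sign bookkeeping turning the $-(-f_{X,W}(t,s))$ contribution into the positive second term. Correct, and no meaningful difference from the paper's argument.
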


\begin{proof}
Differentiating with respect to $t,$ we have
\begin{eqnarray}
\frac{\partial Q(t,s)}{\partial t} &=&-n\binom{n-1}{r-1}F_{X}^{r-1}(t)[\bar{F%
}_{X,W}(t,s\}]^{n-r} \notag \\
&&\left( \int\limits_{s}^{\infty }f_{X,W}(t,y)dy\right) \label{dd0} \\
\frac{\partial ^{2}Q(t,s)}{\partial t\partial s} &=&-n(n-r)\binom{n-1}{r-1}%
F_{X}^{r-1}(t)[\bar{F}_{X,W}(t,s\}]^{n-r-1} \notag\\
&&\frac{\partial \bar{F}_{X,W}(t,s}{\partial s}\left( \int\limits_{s}^{\infty }f_{X,W}(t,y)dy\right)  \notag \\
&&-n\binom{n-1}{r-1}F_{X}^{r-1}(t)[\bar{F}_{X,W}(t,s\}]^{n-r} \notag \\
&&(-f_{X,W}(t,s)) \notag \\
&=&-n(n-r)\binom{n-1}{r-1}F_{X}^{r-1}(t)[\bar{F}_{X,W}(t,s\}]^{n-r-1} \notag \\
&&\frac{\partial \bar{F}_{X,W}(t,s}{\partial s}\left( \int\limits_{s}^{\infty
}f_{X,W}(t,y)dy\right)  \notag \\
&&+n\binom{n-1}{r-1}F_{X}^{r-1}(t)[\bar{F}_{X,W}(t,s\}]^{n-r} \notag \\
&&f_{X,W}(t,s). \label{dd3}
\end{eqnarray}
\end{proof}

\begin{remark}
It is clear from (\ref{dd3}) that if $X$ and $W$ are independent, then
\begin{eqnarray}
&&f_{X_{r:n},W_{[1,r:n]}}(t,s)  \notag \\
&=&n(n-r)\binom{n-1}{r-1}F_{X}^{r-1}(t)[\bar{F}_{X}(t)]^{n-r-1}[\bar{F}_{W}(s\}]^{n-r-1} \notag \\
&&f_{W}(s)[\bar{F}_{X}(t)]f_{X}(t)[\bar{F}_{W}(s)]  \notag \\
&&+n\binom{n-1}{r-1}F_{X}^{r-1}(t)[\bar{F}_{X}(t)]^{n-r}[\bar{F}_{W}(s\}]^{n-r} \notag \\
&&f_{W}(s)f_{X}(t)  \label{dd4} \\
&=&n(n-r)\binom{n-1}{r-1}F_{X}^{r}(t)[\bar{F}_{X}(t)]^{n-r-1}[\bar{F}_{W}(s\}]^{n-r-1} \notag \\
&&f_{W}(s)f_{X}(t)[\bar{F}_{W}(s)]  \notag \\
&&+n\binom{n-1}{r-1}F_{X}^{r-1}(t)[\bar{F}_{X}(t)]^{n-r}[\bar{F}%
_{W}(s\}]^{n-r}f_{W}(s)f_{X}(t).  \notag
\end{eqnarray}
\end{remark}

\begin{remark}
It can be easily checked from (\ref{d22}) that if $s=0,$ then
\begin{eqnarray*}
P\{X_{r:n} &>&t,W_{[r:n]}>0,W_{[r+1:n]}>0,...,W_{[n:n]}>0\} \\
&=&n\binom{n-1}{r-1}\int\limits_{t}^{\infty }[P\{X<x\}]^{r-1}[\bar{F}(\text{
}x,0)]^{n-r} \notag \\
&&\left( \int\limits_{0}^{\infty }f_{X,W}(x,y)dy\right) dx \\
&=&n\binom{n-1}{r-1}\int\limits_{t}^{\infty
}[P\{X<x\}]^{r-1}[1-F_{X}(x)]^{n-r}f_{X}(x)dx \\
&=&P\{X_{r:n}>t\},
\end{eqnarray*}%
which is a marginal survival function of $X_{r:n}.$
\end{remark}

\begin{example}
\label{(Example 1)}Let $X$ nd $W$ are independent random variables with
exponential cdf $F_{X}(t)=1-\exp (-t),t\geq 0$ and Pareto cdf $F_{W}(s)=1-%
\frac{1}{(s+1)^{2}},s\geq 0,$ respectively. The corresponding pdf's are%
\begin{eqnarray*}
f_{X}(t) &=&\left\{
\begin{array}{cc}
e^{-t} & t\geq 0 \\
0 & otherwise%
\end{array}%
\right.  \\
f_{W}(s) &=&\left\{
\begin{array}{cc}
\frac{2}{(s+1)^{3}} & s\geq 0 \\
0 & otherwise%
\end{array}%
\right. .
\end{eqnarray*}%
Then
\begin{equation}
f_{X_{r:n},W_{[1,r:n]}}(t,s)=\left\{
{\footnotesize \begin{array}{cc}
\frac{2n(n-r+1)\binom{n-1}{r-1}}{(s+1)^{3+2(n-r)}(1-e^{-t})}(e^{t}-1)^{r}e^{-(n+1)t}, &t\geq 0,s\geq 0\\
0 & otherwise
\end{array} }
\right. .  \label{d01}
\end{equation}
To leave no doubt that the results are correct one can easily verify using
Mapple 15 that,
\begin{equation*}
\int\limits_{0}^{\infty }\int\limits_{0}^{\infty
}f_{X_{r:n},W_{[1,r:n]}}(t,s)dtds=1.
\end{equation*}%
We use Mapple 15 also to provide graphs of pdf's and reliability functions.
Below we provide a graph of (\ref{d01}) for $n=6,r=3$ and $n=10,r=7$

\begin{center}
\includegraphics[scale=0.40]{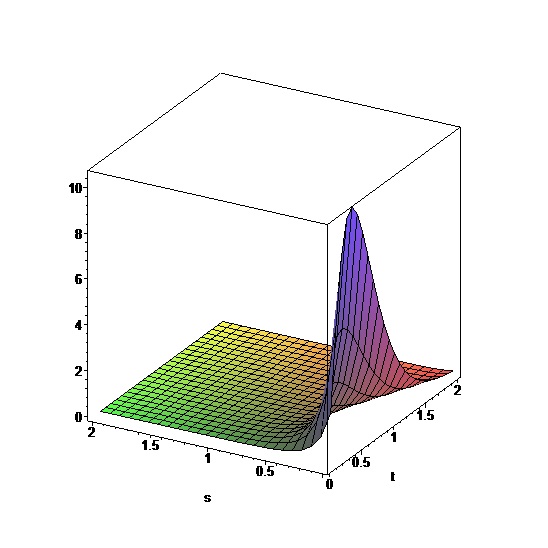}
\includegraphics[scale=0.40]{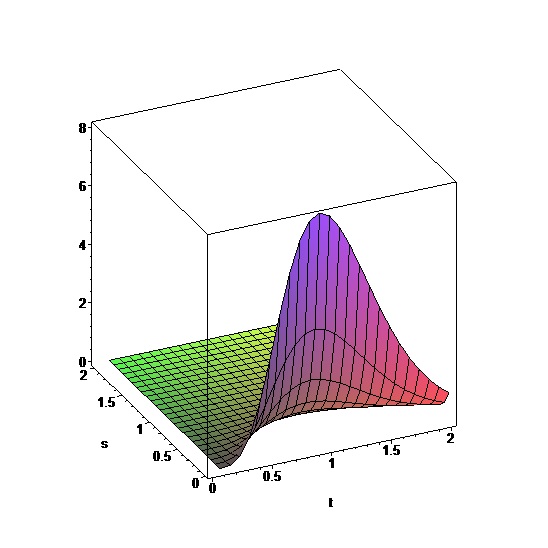}

\begin{tabular}{l}

Figure 1. Graph of joint pdf $f_{X_{r:n},W_{[1,r:n]}}(t,s)$ for \\
$n =6,r=3$ and $n=10,r=7$

\end{tabular}
\end{center}
\end{example}

\begin{example}
\label{Example 2} Let $X$ and $W$ are dependent random variable with \
Farlie-Gumbel-Morgenstern (FGM) copula, \ $C(u,v)=uv\{1+\alpha
(1-u)(1-v)\},0\leq u,v\leq 1.$ The joint cdf is
\begin{equation*}
F_{X,W}(x,y)=F_{X}(x)F_{W}(y)\{1+\alpha (1-F_{X}(x)(1-F_{W}(y))\}.
\end{equation*}
Let $X$ has $Exponential(1)$ distribution and $W$ has $Uniform(2,5)$ distribution, i.e.
\begin{eqnarray*}
F_{X}(x) &=&\left\{
\begin{tabular}{ll}
$1-\exp (-x),$ & $x\geq 0$ \\
$0,$ & $x<0$%
\end{tabular}%
\right. , \\
F_{W}(w) &=&\left\{
\begin{tabular}{ll}
$0,$ & $w<2$ \\
$(w-2)/(5-2),$ & $2\leq w\leq 5$ \\
$1,$ & $w>5$
\end{tabular}
\right. ,
\end{eqnarray*}
Using Maple 15 we have calculated the survival function
\begin{equation*}
Q(t,s)=P\{X_{r:n}>t,\min (W_{[r:n]},...,W_{[n:n]})>s\}
\end{equation*}
and the joint distribution function
\begin{eqnarray*}
G(t,s) &=&P\{X_{r:n}\leq t,\min (W_{[r:n]},...,W_{[n:n]})\leq s\} \\
&=&1-\bar{F}_{X_{r:n}}(t)-\bar{F}_{\min (W_{[r:n]},...,W_{[n:n]})}+Q(t,s),
\end{eqnarray*}
where
\begin{equation*}
\bar{F}_{X_{r:n}}(t)\text{ and }\bar{F}_{\min (W_{[r:n]},...,W_{[n:n]})}
\end{equation*}
are marginal survival functions of $X_{r:n}$ and $\min
(W_{[r:n]},...,W_{[n:n]})$, respectively. The expressions are complicated,
that is why we give only graphs for special values of $n,r$ and $\alpha $
plotted in Maple 15. In Figure 2 the graphs of $Q(t,s)$ and $G(t,s)$ are
given for $n=10,r=4,\alpha =1$
\begin{center}
\includegraphics[scale=0.40]{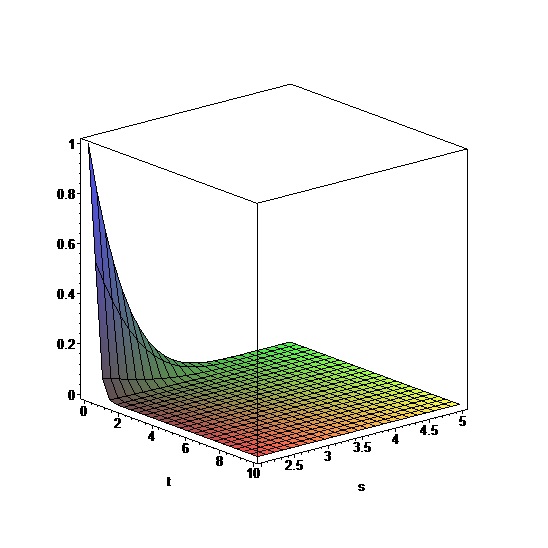}
\includegraphics[scale=0.40]{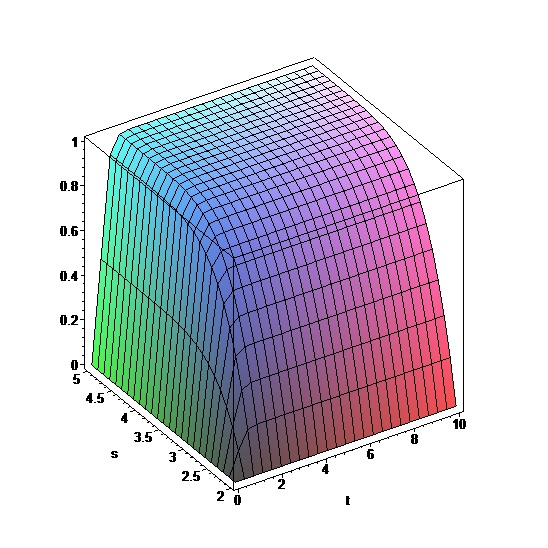}

\begin{tabular}{l}

Figure 2. Graphs of $Q(t,s)$ and $G(t,s)$ for $n=10,r=4,\alpha =1$
\end{tabular}
\end{center}
\end{example}

Now, using the result of Lemma 1, the operational reliability of the system
can be expressed as in the following theorem.

\begin{theorem}
The joint \ survival function of the random variables $X_{r.n}$ and $\min
\{W_{[r:n]}(t),W_{[r+1:n]}(t),...,W_{[n:n]}(t)\}$ is
\begin{eqnarray}
\mathbf{Q}(t,s) &\equiv
&P\{X_{r:n}>t,W_{[r:n]}(t)>s,W_{[r+1:n]}(t)>s,..., \notag \\
&&W_{[n:n]}(t)>s\}  \notag \\
&=&P\{X_{r:n}>t,W_{[r:n]}>\frac{s}{\varphi (t)},W_{[r+1:n]}> \notag \\
&&\frac{s}{\varphi(t)},...,W_{[n:n]}>\frac{s}{\varphi (t)}\}  \notag \\
&=&n\binom{n-1}{r-1}\int\limits_{t}^{\infty }[P\{X<x\}]^{r-1}[P\{X>x,W>
\frac{s}{\varphi (t)}]^{n-r} \notag \\
&&\left( \int\limits_{\frac{s}{\varphi (t)}}^{\infty }f_{X,W}(x,y)dy\right) dx  \notag \\
&=&n\binom{n-1}{r-1}\int\limits_{\frac{s}{\varphi (t)}}^{\infty}\int\limits_{t}^{\infty }[P\{X<x\}]^{r-1}[P\{X>x,W> \notag \\ 
&&\frac{s}{\varphi (t)}]^{n-r}f_{X,W}(x,y)dxdy  \notag \\
&=&n\binom{n-1}{r-1}\int\limits_{\frac{s}{\varphi (t)}}^{\infty
}\int\limits_{t}^{\infty }[P\{X<x\}]^{r-1}[\bar{F}_{X,W}(x,\frac{s}{\varphi
(t)})]^{n-r} \notag \\ 
&&f_{X,W}(x,y)dxdy.  \label{q0}
\end{eqnarray}
Note that,
\begin{eqnarray*}
\bar{F}_{X,W}(x,w) &=&P\{X>x,W>w\} \\
&=&\hat{C}(\bar{F}_{X}(x),\bar{F}_{W}(w)),
\end{eqnarray*}%
where
\begin{equation*}
\hat{C}(u,v)=u+v+C(1-u,1-v)-1
\end{equation*}
is a survival copula of $C(u,v),$ $\bar{F}_{X}(x)$ is a survival function of
$X,$ $\bar{F}_{W}(s)$ is a survival function of $W,$ $f_{X,W}(x,v)$ is a
joint pdf of $X$ and $W.$
\end{theorem}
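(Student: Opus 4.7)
The plan is to deduce the theorem directly from Lemma~1 by exploiting the deterministic multiplicative structure of the power process. First I would observe that because $\varphi(t)>0$ is non-random, multiplying every $W_i$ by $\varphi(t)$ does not alter the ranks of the $X_i$'s, so the concomitant satisfies $W_{[i:n]}(t)=\varphi(t)\,W_{[i:n]}$. Consequently, for any fixed $t$,
$$\{W_{[i:n]}(t)>s\}=\{W_{[i:n]}>s/\varphi(t)\},\qquad i=r,r+1,\ldots,n,$$
which establishes the first equality displayed in (\ref{q0}) and reduces the problem to computing $Q(t,s/\varphi(t))$, with $Q$ the quantity evaluated in Lemma~1.

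Once this reduction is in place, I would substitute $s\mapsto s/\varphi(t)$ directly into formula (\ref{d22}) to obtain the third line of (\ref{q0}). The remaining two rewritings are purely cosmetic: the factor $\int_{s/\varphi(t)}^{\infty}f_{X,W}(x,y)\,dy$ combined with the outer $dx$ becomes a double integral over the region $\{(x,y):x>t,\;y>s/\varphi(t)\}$, giving the fourth line; and since $[\bar F_{X,W}(x,s/\varphi(t))]^{n-r}$ does not depend on the dummy variable $y$, it can be moved inside the inner integral to yield the final form.

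For the copula characterization I would invoke Sklar's theorem on the survival scale. A straightforward inclusion-exclusion gives $P\{X>x,W>w\}=1-F_X(x)-F_W(w)+F_{X,W}(x,w)$; substituting $F_{X,W}(x,w)=C(F_X(x),F_W(w))$ and writing $\bar F_X=1-F_X$, $\bar F_W=1-F_W$ produces $\bar F_{X,W}(x,w)=\hat C(\bar F_X(x),\bar F_W(w))$ with $\hat C(u,v)=u+v+C(1-u,1-v)-1$, as claimed.

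The theorem contains essentially no new probabilistic content beyond Lemma~1; the main—and essentially only—obstacle is notational bookkeeping. One must ensure that every occurrence of $s$ inside the nested integrand of (\ref{d22}), both as the lower limit of the inner $y$-integral and inside $\bar F_{X,W}(x,s)$, is replaced consistently by $s/\varphi(t)$, and that the three equivalent forms displayed in (\ref{q0}) are indeed equal after this substitution and after routinely interchanging the order of integration.
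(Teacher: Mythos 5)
Your proposal is correct and follows essentially the same route as the paper: the theorem is obtained directly from Lemma~1 by noting that $W_{[i:n]}(t)=\varphi(t)W_{[i:n]}$ (since $\varphi(t)>0$ is deterministic and the ordering is induced by the $X_i$'s), substituting $s/\varphi(t)$ for $s$ in (\ref{d22}), and rewriting the iterated integral, with the survival-copula identity following from the standard inclusion--exclusion argument. No gaps.
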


\begin{corollary}
Let $X$ and $W$ be independent random variables, i.e. $C(u,v)=uv$ and $
F_{X,W}(x,w)=F_{X}(x)F_{W}(w),$ for all $(x,w)\in \mathbb{R}^{2}.$ Then for any $t>0$ and $s>0,$
\begin{eqnarray}
\mathbf{Q}(t,s)
&=&P\{X_{r:n}>t,W_{[r:n]}(t)>s,W_{[r+1:n]}(t)>s,..., \notag \\
&&W_{[n:n]}(t)>s\}  \notag \\
&=&P\{X_{r:n}>t,W_{[r:n]}>\frac{s}{\varphi (t)},W_{[r+1:n]}>\frac{s}{\varphi
(t)},..., \notag \\ 
&&W_{[n:n]}>\frac{s}{\varphi (t)}\}  \notag \\
&=&n\binom{n-1}{r-1}\left[ \bar{F}_{W}\left( \frac{s}{\varphi (t)}\right) %
\right] ^{n-r+1}  \notag \\
&&\times \int\limits_{t}^{\infty }[F_{X}(x)]^{r-1}\left[ 1-F_{X}(x)\right]
^{n-r}dx.\label{q1}
\end{eqnarray}
\end{corollary}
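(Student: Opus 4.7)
The plan is to take the general formula (\ref{q0}) established in Theorem 1 and specialize it to the case $C(u,v)=uv$. Under independence, two product structures immediately become available: the joint density factorizes as $f_{X,W}(x,y)=f_X(x)f_W(y)$, and the joint survival function factorizes as $\bar{F}_{X,W}(x,w)=\bar{F}_X(x)\bar{F}_W(w)$ (this latter identity is also a direct consequence of the survival copula formula $\hat{C}(u,v)=u+v+C(1-u,1-v)-1$ reducing to $\hat{C}(u,v)=uv$ when $C(u,v)=uv$). These two factorizations are the only probabilistic inputs I will need.

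First, I would evaluate the innermost integral in (\ref{q0}): using $f_{X,W}(x,y)=f_X(x)f_W(y)$, the $y$-integral separates cleanly as
\begin{equation*}
\int_{s/\varphi(t)}^{\infty} f_{X,W}(x,y)\,dy \;=\; f_X(x)\,\bar{F}_W\!\left(\frac{s}{\varphi(t)}\right),
\end{equation*}
isolating the $x$- and $s/\varphi(t)$-dependence. Second, I would replace the factor $[\bar{F}_{X,W}(x,s/\varphi(t))]^{n-r}$ inside the outer integral by $[\bar{F}_X(x)]^{n-r}[\bar{F}_W(s/\varphi(t))]^{n-r}$, again using independence.

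Third, I would pull the $x$-independent factor $[\bar{F}_W(s/\varphi(t))]^{n-r+1}$ (obtained by combining the exponent $n-r$ with the single $\bar{F}_W$ from the inner integral) outside the $x$-integral. The remaining integrand, in terms of $x$ alone, then has the structure $[F_X(x)]^{r-1}[\bar{F}_X(x)]^{n-r}f_X(x)$ (times the constant $n\binom{n-1}{r-1}$), which is exactly the pdf of $X_{r:n}$ and yields the stated integral in (\ref{q1}) over $(t,\infty)$. There is no substantive obstacle: the derivation is a routine substitution, and the only point requiring care is to ensure the $\bar{F}_W$ factors from both the outer $[\bar{F}_{X,W}]^{n-r}$ term and the inner density integral combine consistently to produce the clean exponent $n-r+1$.
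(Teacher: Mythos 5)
Your argument is correct and is essentially the route the paper intends: the Corollary is stated without a separate proof precisely because it is the direct specialization of (\ref{q0}) to $C(u,v)=uv$, using $f_{X,W}(x,y)=f_{X}(x)f_{W}(y)$ and $\bar{F}_{X,W}(x,w)=\bar{F}_{X}(x)\bar{F}_{W}(w)$ and then pulling the $x$-free factor $\bigl[\bar{F}_{W}\bigl(\tfrac{s}{\varphi(t)}\bigr)\bigr]^{n-r+1}$ outside the integral. One caveat: your computation (correctly) produces the integrand $[F_{X}(x)]^{r-1}[1-F_{X}(x)]^{n-r}f_{X}(x)\,dx$, i.e.\ integration with respect to $dF_{X}(x)$, whereas the display (\ref{q1}) as printed has only $dx$ and omits the density $f_{X}(x)$; so your statement that the calculation ``yields the stated integral in (\ref{q1})'' is not literally true of the formula as written. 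The missing $f_{X}$ appears to be a typographical slip in the paper (it propagates into the expression for $\mathbf{Q}(x,s)$ and its derivative used in the proof of Lemma \ref{Lemma 2}); your version is the one consistent with Theorem 1, with Remark 2 (setting $s=0$ must recover $P\{X_{r:n}>t\}$), and with the fact that $n\binom{n-1}{r-1}[F_{X}]^{r-1}[\bar{F}_{X}]^{n-r}f_{X}$ is the pdf of $X_{r:n}$. So keep your integrand and flag the discrepancy rather than matching (\ref{q1}) verbatim.
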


\begin{example}
(\label{Example 3})Let $F_{X}(x)=1-\exp (-x),x>0,F_{W}(s)=1-\exp (-s),$ $%
\varphi (t)=\exp (-t)$,$n=10,r=6.$ We have calculated the survival function $%
\mathbf{Q}(t,s)$ using formula (\ref{q1}). The graph plotted in Maple 15 is
given below in Figure 3.

\begin{center}
\includegraphics[scale=0.40]{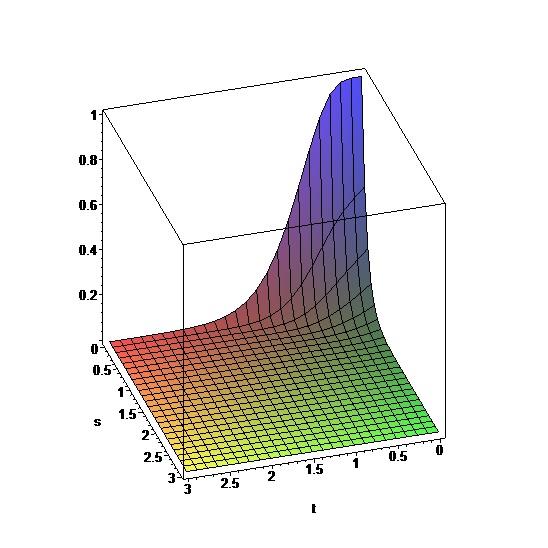}
\includegraphics[scale=0.40]{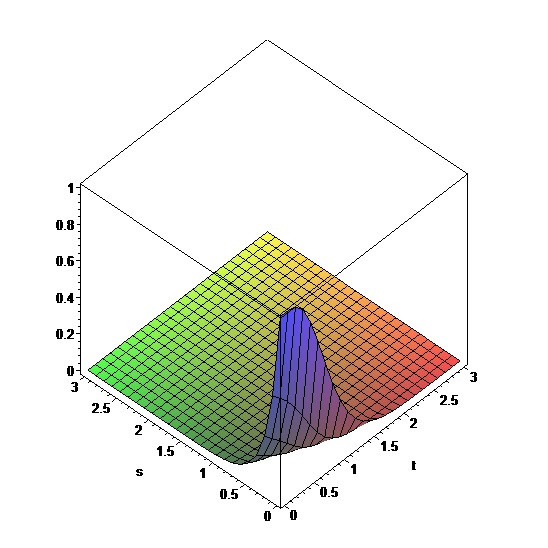}

\begin{tabular}{l}

Figure 3. Graph of joint survival function $Q(t,s)$ given in \\
(\ref{q1}), for $n=10,r=6$
\end{tabular}
\end{center}

\end{example}

\section{The operational reliability }

Taking $t=0$ in (\ref{q0}) we can write the marginal survival function of
the random variable
\begin{equation*}
\min \{W_{[r:n]}(t),W_{[r+1:n]}(t),...,W_{[n:n]}(t)\}
\end{equation*}
as
\begin{eqnarray}
D(s) &\equiv &P\{\min \{W_{[r:n]}(t),W_{[r+1:n]}(t),...,W_{[n:n]}(t)\}>s\}
\notag \\
&=&n\binom{n-1}{r-1}\int\limits_{s}^{\infty }\int\limits_{0t}^{\infty
}[P\{X<x\}]^{r-1}[\bar{F}_{X,W}(x,s)]^{n-r} \notag \\ 
&&f_{X,W}(x,y)dxdy  \notag \\
&=&n\binom{n-1}{r-1}\int\limits_{0}^{\infty}[P\{X<x\}]^{r-1}[P\{X>x,W>s]^{n-r} \notag\\ 
&&\times \left( \int\limits_{s}^{\infty }f_{X,W}(x,y)dy\right) dx,  \label{d1}
\end{eqnarray}
since $\varphi (0)=1.$

\begin{definition}
The operational reliability of the system is determined as
\begin{eqnarray*}
\mathbf{P}_{s}\mathbf{(}t\mathbf{)} &\mathbf{\equiv }&\mathbf{P\{}
X_{r:n}>t\mid \min \{W_{[r:n]}(t),W_{[r+1:n]}(t),..., \notag \\
&&W_{[n:n]}(t)\}\mathbf{>}s\} \\
&=&\mathbf{Q}(t,s)/D(s)
\end{eqnarray*}

It is clear that $\mathbf{P}_{s}\mathbf{(t)}$ is a probability of $(n-r+1)$%
-out-of-$n$ system's physical failure at time $t,$ i.e. the physical failure
of the $r$th component, given that at time $t,$ all the working components
have required power greater than $s.$ Here, obviously $s$ indicates a given
minimal required power the working components must have and plays a role of
a parameter.
\end{definition}

Using (\ref{q0}) and (\ref{d1}) we have for the operational reliability of
the system
{\footnotesize \begin{equation}
\mathbf{P}_{s}\mathbf{(}t\mathbf{)=}\frac{\int\limits_{t}^{\infty
}[P\{X<x\}]^{r-1}[P\{X>x,W>\frac{s}{\varphi (t)}]^{n-r}
\left( \int\limits_{\frac{s}{\varphi (t)}}^{\infty }f_{X,W}(x,y)dy\right) dx}{
\int\limits_{0}^{\infty }[P\{X<x\}]^{r-1}[P\{X>x,W>s]^{n-r} \left(
\int\limits_{s}^{\infty }f_{X,W}(x,y)dy\right) dx.}.  \label{d2}
\end{equation}}

Below we provide the graph of $\mathbf{P}_{s}\mathbf{(}t\mathbf{)}$
plotted in Maple 15, for fixed values of $s=0,2,4,6,8,10$ for $%
F_{X}(x)=1-\exp (-x),x\geq 0,F_{W}(y)=1-\exp (-y),y\geq
0,n=6,r=5,F_{X,W}(x,y)=F_{X}(x)F_{W}(y)\{1+\alpha
(1-F_{X}(x))(1-F_{W}(y)\},\alpha =1$

\begin{center}
\includegraphics[scale=0.40]{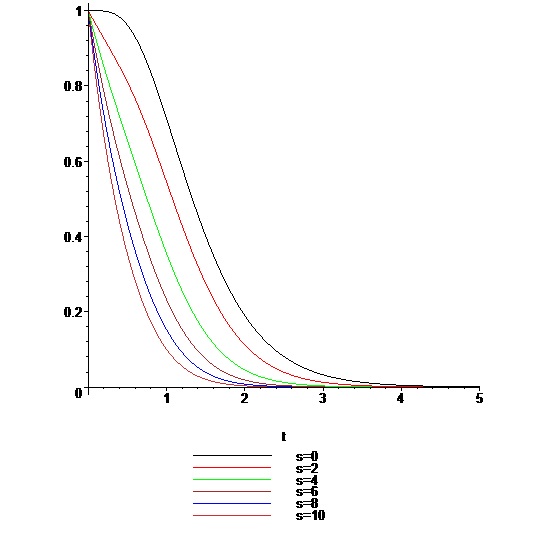}

\begin{tabular}{l}
Figure 4.  Graph of $P_{s}(t)$ for different values of $s$
\end{tabular}
\end{center}

\begin{remark}
Obviously, the definition of the operational reliability function is
different than the reliability function of ordinary $(n-r+1)$-out-of-$n$
coherent system. \ The operational lifetime of the sytem is measured by a
conditional random variable $(X_{r:n}\mid (\min
\{W_{[r:n]}(t),W_{[r+1:n]}(t),...,W_{[n:n]}(t)\}>s)$ whose distribution is
given in (\ref{d2}), whereas the reliability of \ of ordinary $(n-r+1)$%
-out-of-$n$ is measured just with the survival function of $X_{r:n}.$ The
condition $(\min \{W_{[r:n]}(t),W_{[r+1:n]}(t),...,W_{[n:n]}(t)\}>s)$ $\ $\
ensures that the power of each surviving component of the system at time t
is at least s.
\end{remark}

\section{Mean residual life function}
Consider a conditional random variable

\begin{eqnarray*}
T_{r:n;s} &\equiv & ( X_{r:n}\mid X_{r:n}>t,\min\{W_{[r:n]}(t),W_{[r+1:n]}(t),...,\\
&&W_{[n:n]}(t)\}>s)
\end{eqnarray*}

The operational mean residual life (MRL) function of the system is defined as
\begin{eqnarray*}
\Psi_{r:n,s}(t) &\equiv &E\{T_{r:n;s}-t\} \\
&=&E\{X_{r:n}-t\mid X_{r:n}>t,\min\{W_{[r:n]}(t), \\ 
&&W_{[r+1:n]}(t),...,W_{[n:n]}(t)\}>s\} \\
&=&E\{X_{r:n}\mid X_{r:n}>t,\min\{W_{[r:n]}(t), \\
&&W_{[r+1:n]}(t),...,W_{[n:n]}(t)\}>s\}-t.
\end{eqnarray*}

\begin{lemma}
\label{Lemma 2}The cdf of the random variable $T_{r:n;s}$ is
\begin{equation*}
F_{T_{r:n,s}}(x)=1-\frac{\mathbf{Q}(x,s)}{\mathbf{Q}(t,s)},t<x,
\end{equation*}
and the pdf of $T_{r:n;s}$ is
\begin{equation*}
f_{Tr:n,s}(x) = \frac{d}{dx}F_{T_{r:n,s}}(x)
\end{equation*}

\begin{eqnarray}
&=& \left\{ \left[ \bar{F}_{W}\left( \frac{s}{\varphi(t)}\right) \right]^{n-r+1}
\int\limits_{t}^{\infty} [F_{X}(u)]^{r-1} \left[ 1-F_{X}(u) \right]^{n-r} du 
\right\}^{-1} \times \notag \\
&& \Bigg\{ (n-r) \left[ \bar{F}_{W}\left( \frac{s}{\varphi(x)} \right) \right]^{n-r} 
f_{W}\left( \frac{s}{\varphi(x)} \right) 
\frac{s \varphi^{\prime}(x)}{\varphi^{2}(x)} \notag \\
&&\int\limits_{x}^{\infty} [F_{X}(u)]^{r-1} \left[ 1-F_{X}(u) \right]^{n-r} du 
\notag \\
&& \quad - \left[ \bar{F}_{W}\left( \frac{s}{\varphi(x)} \right) \right]^{n-r+1}
[F_{X}(x)]^{r-1} \left[ 1-F_{X}(x) \right]^{n-r} \Bigg\}, \notag \\
&& x > t, \quad f_{Tr:n,s}(x) = 0 \quad \text{if } x \leq t. \label{p4}
\end{eqnarray}
\end{lemma}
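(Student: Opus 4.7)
\textbf{The plan} is to prove the cdf first by a direct conditional-probability argument, then differentiate to obtain the pdf using the explicit independent-case expression for $\mathbf{Q}$ given in~(\ref{q1}).

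For the cdf, fix $x>t$. Because $\varphi$ is decreasing, every realization satisfies $W_{[i:n]}(x)=W_{[i:n]}\varphi(x)\le W_{[i:n]}\varphi(t)=W_{[i:n]}(t)$ for each $i\ge r$, so the event $\mathcal{E}_x:=\{X_{r:n}>x,\ \min_{i\ge r}W_{[i:n]}(x)>s\}$ is contained in $\mathcal{E}_t$. Since $P(\mathcal{E}_u)=\mathbf{Q}(u,s)$ by the very definition of $\mathbf{Q}$, I would write
\[
F_{T_{r:n,s}}(x)=1-P(\mathcal{E}_x\mid\mathcal{E}_t)=1-\frac{\mathbf{Q}(x,s)}{\mathbf{Q}(t,s)}\qquad(x>t),
\]
and note that for $x\le t$ the conditioning forces $T_{r:n,s}>t$, so the cdf vanishes there. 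This already proves the first half of the lemma in full generality (independent or dependent $X$, $W$).

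For the pdf, I would differentiate the cdf in $x$. Only the numerator depends on $x$, giving $f_{T_{r:n,s}}(x)=-\mathbf{Q}(t,s)^{-1}\partial_x\mathbf{Q}(x,s)$. Setting $J(x):=\int_x^{\infty}[F_X(u)]^{r-1}[1-F_X(u)]^{n-r}du$, the independent-case formula~(\ref{q1}) reads $\mathbf{Q}(x,s)=n\binom{n-1}{r-1}[\bar F_W(s/\varphi(x))]^{n-r+1}J(x)$. The product rule then produces two contributions: the derivative of $J$, equal to $-[F_X(x)]^{r-1}[1-F_X(x)]^{n-r}$, and the chain-rule derivative of the first factor, where $\frac{d}{dx}\bar F_W(s/\varphi(x))=f_W(s/\varphi(x))\,s\varphi'(x)/\varphi^2(x)$ after composing with $\bar F_W'=-f_W$ and $d(s/\varphi(x))/dx=-s\varphi'(x)/\varphi^2(x)$. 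Substituting the explicit form of $\mathbf{Q}(t,s)$ from~(\ref{q1}) into the denominator cancels the common prefactor $n\binom{n-1}{r-1}$ and leaves the ratio displayed in~(\ref{p4}).

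The only genuine bookkeeping obstacle is tracking the two distinct ways $x$ enters $\mathbf{Q}(x,s)$---as the lower limit of the integral $J(x)$ and through the argument $s/\varphi(x)$ of $\bar F_W$---and keeping the signs straight, recalling $\varphi'(x)<0$ since $\varphi$ decreases. Once these are handled, the formula follows by routine arithmetic; no further probabilistic input is needed beyond~(\ref{q1}).
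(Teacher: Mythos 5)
Correct, and essentially the same approach as the paper's appendix proof: you obtain the cdf by the same conditional-probability reduction to $1-\mathbf{Q}(x,s)/\mathbf{Q}(t,s)$ (your nesting argument $\mathcal{E}_x\subseteq\mathcal{E}_t$ is simply a tidier organization of the paper's chain of manipulations, with the same reading of the power condition at the current time $x$), and the pdf by differentiating the independence-case expression (\ref{q1}), exactly as the paper does. One caveat: carried out faithfully, your product/chain-rule computation produces the coefficient $(n-r+1)$ (from differentiating $[\bar{F}_W(s/\varphi(x))]^{n-r+1}$) and an overall minus sign coming from $f_{T_{r:n,s}}(x)=-\,\partial_x\mathbf{Q}(x,s)/\mathbf{Q}(t,s)$, which makes the density nonnegative since $\varphi'(x)<0$; this agrees with the paper's intermediate formula for $\frac{d}{dx}\mathbf{Q}(x,s)$ but not with (\ref{p4}) as printed, which carries $(n-r)$ and the opposite overall sign, so your claim that the arithmetic ``leaves the ratio displayed in (\ref{p4})'' holds only up to that sign and coefficient (apparently typos in the displayed statement), and your write-up should flag this rather than assert verbatim agreement.
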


\begin{proof}
\bigskip (see Appendix)
\end{proof}

\bigskip Using (\ref{p4}) the MRL function can be calculated as%
\begin{equation*}
\Psi _{r:n,s}(t)=\int\limits_{t}^{\infty }xf_{T_{r:n,s}}(x)dx-t.
\end{equation*}

\section{Conclusion}
In this work we are interested in the reliability of coherent systems
characterized by the physical lifetimes of their components and also by the
level of power they contribute to the system. The reliability of such
systems can be expressed in terms of the order statistics of the lifetimes
of the components and the joint survival functions of their concomitants.
The formulas for a reliability function are derived. Examples with known
lifetime distributions with graphs are presented. The formula for the mean
residual life function is given.

\section{Appendix}

\begin{proof}
(Proof of Lemma 1) Using independence of random vectors $(X_{1},W_{1}),...,(X_{n},W_{n}),$ and conditioning on $\ \{X=u,Y=y\}$ \ we can write
\begin{eqnarray*}
P\{X_{r:n} &>&t,W_{[r:n]}>s,W_{[r+1:n]}>s,...,W_{[n:n]}>s\} \\
&=&\sum\limits_{k=1}^{n}P%
\{X_{r:n}>t,W_{[r:n]}>s,W_{[r+1:n]}>s,..., \\
&&W_{[n:n]}>s,"X_{k}\text{ is }X_{r:n}"\} \\
&=&\sum\limits_{k=1}^{n}\sum
\limits_{i_{1},i_{2},...,i_{r-1},i_{r+1,...,i_{n}}}P%
\{X_{k}>t,W_{k}> \\
&&s,W_{i_{r+1}}>s,...,W_{i_{n}}>s, \\
X_{i_{1}} &<&X_{k},...,X_{i_{r-1}}<X_{k},X_{i_{r+1}}>X_{k},...,X_{i_{n}}>X_{k}\}
\end{eqnarray*}
\begin{eqnarray}
&=&\sum\limits_{k=1}^{n}\sum
\limits_{i_{1},i_{2},...,i_{r-1},i_{r+1,...,i_{n}}}\int \int
P\{X_{k}>t,W_{k}> \notag \\ 
&&s,W_{i_{r+1}}>s,...,W_{i_{n}}>s,  \notag \\
X_{i_{1}}
&<&X_{k},...,X_{i_{r-1}}<X_{k},X_{i_{r+1}}> \notag \\ 
&&X_{k},...,X_{i_{n}}>X_{k}\mid X_{k}=x,W_{k}=y\}  \label{ap1} \\
&&dF_{X_{k},W_{k}}(x,y)  \notag
\end{eqnarray}
\begin{eqnarray}
&=&\sum\limits_{k=1}^{n}\sum
\limits_{i_{1},i_{2},...,i_{r-1},i_{r+1,...,i_{n}}}\int\limits_{s}^{\infty
}\int\limits_{t}^{\infty }P\{X_{i_{1}}<x,..., \notag \\
&&X_{i_{r-1}}<x,  \notag \\
X_{i_{r+1}}
&>&x,...,X_{i_{n}}>x,W_{i_{r+1}}>s,...,W_{i_{n}}>s\} \notag \\
&&dF_{X_{k},W_{k}}(x,y) \label{ap2} \\
&=&n\binom{n-1}{r-1}\int\limits_{s}^{\infty }\int\limits_{t}^{\infty
}[P\{X<x\}]^{r-1}[P\{X>x,W>s\}]^{n-r} \notag \\
&&f_{X_{k},W_{k}}(x,y)dxdy  \notag \\
&=&n\binom{n-1}{r-1}\int\limits_{t}^{\infty
}[P\{X<x\}]^{r-1}[P\{X>x,W>s\}]^{n-r} \notag \\
&&\left( \int\limits_{s}^{\infty}f_{X,W}(x,y)dy\right) dx  \notag
\end{eqnarray}
where
\begin{equation*}
\bar{F}(\text{ }x,s)=P\{X>x,W>s\}.
\end{equation*}
Note that in passing from (\ref{ap1}) to (\ref{ap2}) we use the independence of
vectors $(X_{1},W_{1}),...,(X_{n},W_{n})$ and write
\begin{eqnarray*}
\int \int P\{X_{k} >t,W_{k}>s,W_{i_{r+1}}>s,...,W_{i_{n}}>s, \\
X_{i_{1}} <X_{k},...,X_{i_{r-1}}<X_{k},X_{i_{r+1}}>X_{k},..., \\
X_{i_{n}} >X_{k}\mid X_{k}=x,W_{k}=y\}dF_{X,W}(u,y) \\
=\lim_{h_{1}\rightarrow 0,h_{2}\rightarrow 0}\int \int \frac{1}{P\{x<X_{k}<x+h_{1},y<W_{k}<y+h_{2}\}}\times \\
\times P\{X_{k} >t,W_{k}>s,W_{i_{r+1}}>s,...,W_{i_{n}}>s, \\
X_{i_{1}} <t,...,X_{i_{r-1}}<t,X_{i_{r+1}}>t,...,X_{i_{n}}>t, \\
x <X_{k}<x+h_{1},y<W_{k}<y+h_{2}\}f_{X_{k},W_{k}}(x,y)dxdy \\
=n\binom{n-1}{r-1}\int\limits_{s}^{\infty }\int\limits_{t}^{\infty
}[P\{X<x\}]^{r-1}[P\{X>x,W>s\}]^{n-r} \\
f_{X,W}(x,y)dxdy.
\end{eqnarray*}
\end{proof}

\begin{proof}
(Proof of Lemma 2)The distribution of $T_{r:n;s}$ can be derived from (\ref%
{d2}). Indeed,
{\footnotesize
\begin{eqnarray*}
F_{T_{r:n,s}}(x) \equiv P\{T_{r:n;s}\leq x\} \\
=P\left\{ X_{r:n}\leq x\mid X_{r:n}>t,\min
\{W_{[r:n]}(t),W_{[r+1:n]}(t),...,W_{[n:n]}(t)\}>s\}_{\substack{  \\ }}%
\right\} \\
=\frac{P\left\{ X_{r:n}\leq x,X_{r:n}>t,\min
\{W_{[r:n]}(t),W_{[r+1:n]}(t),...,W_{[n:n]}(t)\}>s\}_{\substack{  \\ }}%
\right\} }{P\left\{ X_{r:n}>t,\min
\{W_{[r:n]}(t),W_{[r+1:n]}(t),...,W_{[n:n]}(t)\}>s\}_{\substack{  \\ }}%
\right\} }
\end{eqnarray*}}
{\footnotesize
\begin{eqnarray*}
&=&\frac{P\left\{ t<X_{r:n}\leq x,\min
\{W_{[r:n]}(t),W_{[r+1:n]}(t),...,W_{[n:n]}(t)\}>s\}_{\substack{  \\ }}%
\right\} }{P\left\{ X_{r:n}>t,\min
\{W_{[r:n]}(t),W_{[r+1:n]}(t),...,W_{[n:n]}(t)\}>s\}_{\substack{  \\ }}%
\right\} }
\end{eqnarray*}}
{\footnotesize
\begin{eqnarray*}
&=&\frac{P\left\{ X_{r:n}\leq x,\min
\{W_{[r:n]}(t),W_{[r+1:n]}(t),...,W_{[n:n]}(t)\}>s\}_{\substack{  \\ }}%
\right\} }{P\left\{ X_{r:n}>t,\min
\{W_{[r:n]}(t),W_{[r+1:n]}(t),...,W_{[n:n]}(t)\}>s\}_{\substack{  \\ }}%
\right\} }- \\
&&-\frac{P\left\{ X_{r:n}\leq t,\min\{W_{[r:n]}(t),W_{[r+1:n]}(t),...,W_{[n:n]}(t)\}>s\}_{\substack{  \\ }}
\right\} }{P\left\{ X_{r:n}>t,\min\{W_{[r:n]}(t),W_{[r+1:n]}(t),...,W_{[n:n]}(t)\}>s\}_{\substack{  \\ }}
\right\} }
\end{eqnarray*}}
{\footnotesize
\begin{eqnarray*}
&=&\frac{P\left\{ X_{r:n}\leq x\mid \min
\{W_{[r:n]}(t),W_{[r+1:n]}(t),...,W_{[n:n]}(t)\}>s\}_{\substack{  \\ }}%
\right\} }{P\left\{ X_{r:n}>t\mid \min
\{W_{[r:n]}(t),W_{[r+1:n]}(t),...,W_{[n:n]}(t)\}>s\}_{\substack{  \\ }}%
\right\} } \\
&&-\frac{P\left\{ X_{r:n}\leq t\mid \min
\{W_{[r:n]}(t),W_{[r+1:n]}(t),...,W_{[n:n]}(t)\}>s\}_{\substack{  \\ }}%
\right\} }{P\left\{ X_{r:n}>t\mid \min
\{W_{[r:n]}(t),W_{[r+1:n]}(t),...,W_{[n:n]}(t)\}>s\}_{\substack{  \\ }}%
\right\} } \\
&=&\frac{1-\mathbf{Q}(x,s)}{\mathbf{Q}(t,s)}-\frac{1-\mathbf{Q}(t,s)}{%
\mathbf{Q}(t,s)} \\
&=&\frac{\mathbf{Q}(t,s)-\mathbf{Q}(x,s)}{\mathbf{Q}(t,s)}=1-\frac{\mathbf{Q}%
(x,s)}{\mathbf{Q}(t,s)},t<x.
\end{eqnarray*}}
Since,
\begin{eqnarray*}
\mathbf{Q}(x,s) &=&n\binom{n-1}{r-1}\left[ \bar{F}_{W}\left( \frac{s}{%
\varphi (x)}\right) \right] ^{n-r+1} \\
&&\times \int\limits_{x}^{\infty }[F_{X}(u)]^{r-1}\left[ 1-F_{X}(u)\right]
^{n-r}du,
\end{eqnarray*}
\centering
and
\begin{eqnarray*}
\frac{d}{dx}\mathbf{Q}(x,s) =(n-r+1)n\binom{n-1}{r-1}\left[ \bar{F}%
_{W}\left( \frac{s}{\varphi (x)}\right) \right] ^{n-r} \\
f_{W}(\frac{s}{\varphi(x)})\frac{s\varphi ^{\prime }(x)}{\varphi ^{2}(x)} \times \int\limits_{x}^{\infty }[F_{X}(u)]^{r-1}\left[ 1-F_{X}(u)\right]
^{n-r}du \\
-n\binom{n-1}{r-1}\left[ \bar{F}_{W}\left( \frac{s}{\varphi (x)}\right) %
\right] ^{n-r+1}[F_{X}(x)]^{r-1}\left[ 1-F_{X}(x)\right] ^{n-r}
\end{eqnarray*}%
the pdf of $T_{r:n;s}$ can be found s follows:%
\begin{equation*}
f_{Tr:n,s}(x)=\frac{d}{dx}F_{T_{r:n,s}}(x)
\end{equation*}%
\begin{eqnarray*}
&=&\left\{ \left[ \bar{F}_{W}\left( \frac{s}{\varphi (t)}\right) \right]
^{n-r+1}\int\limits_{t}^{\infty }[F_{X}(u)]^{r-1}\left[ 1-F_{X}(u)\right]
^{n-r}du\right\} ^{-1}\times \\
&&\left\{ -\left[ \bar{F}_{W}\left( \frac{s}{\varphi (x)}\right) \right]
^{n-r+1}[F_{X}(x)]^{r-1}\left[ 1-F_{X}(x)\right] ^{n-r}\right. \\
&&+(n-r)\left[ \bar{F}_{W}\left( \frac{s}{\varphi (x)}\right) \right]
^{n-r}f_{W}(\frac{s}{\varphi (x)})\frac{s\varphi ^{\prime }(x)}{\varphi^{2}(x)} \\
&& \int\limits_{x}^{\infty }[F_{X}(u)]^{r-1}\left[ 1-F_{X}(u)\right]^{n-r}du, \\
x &>&t,\text{ }f_{Tr:n,s}(x)=0\text{ if }x\leq t.
\end{eqnarray*}
\end{proof}

\begin{IEEEbiographynophoto}{Ismihan Bayramoglu}
(Ismihan Bairamov) is presently a Professor of Mathematics and Statistics at Izmir University of Economics. His research interests focus mainly on the theory of statistics, probability, reliability engineering, nonparametric statistics, and multivariate distributions and copulas. He has published numerous articles and chapters in books dealing with a wide range of theoretical and applied issues of theoretical statistics and applications. He serves as an associate editor and as a guest editor of several international journals. 
\end{IEEEbiographynophoto}

\begin{IEEEbiography}[{\includegraphics[width=1in,height=1.25in,clip,keepaspectratio]{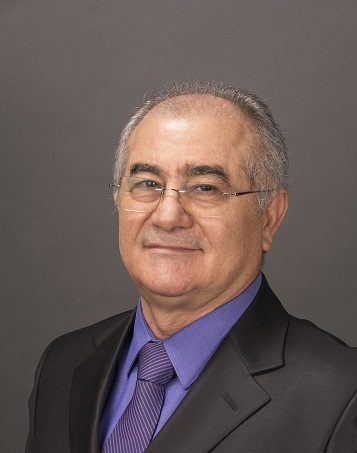}}]{Ismihan Bayramoglu} (Ismihan Bairamov) is presently a Professor of Mathematics and Statistics at Izmir University of Economics. His research interests focus mainly on the theory of statistics, probability, reliability engineering, nonparametric statistics, and multivariate distributions and copulas. He has published numerous articles and chapters in books dealing with a wide range of theoretical and applied issues of theoretical statistics and applications. He serves as an associate editor and as a guest editor of several international journals.\end{IEEEbiography}

\end{document}